\long\def\ignore#1{}
\let\oldi\ignore
\newtheorem{THM}{\textbf{Theorem}}[section]
\newtheorem{THMs}{\textbf{Theorem}}[section]
\newtheorem{LEM}[THM]{\textbf{Lemma}}
\newtheorem{CON}[THM]{\textbf{Conjecture}}
\newtheorem{PROP}[THM]{\textbf{Proposition}}
\newtheorem{COR}[THM]{\textbf{Corollary}}
\newtheorem{CORs}{\textbf{Corollary}}[section]
\newtheorem{PRO}[THM]{\textbf{Problem}}
\newtheorem{FAC}{\textbf{Fact}}
\newtheorem{REM}{\textbf{Remark}}
\newtheorem{OPR}{\textbf{Operation}}
\newtheorem{CLA}{\textbf{Claim}}[section]
\newtheorem{THM}{Theorem}[section]
\newtheorem{LEM}[THM]{Lemma}
\newtheorem{CON}[THM]{Conjecture}
\newtheorem{COR}[THM]{Corollary}
\theoremstyle{definition}
\newtheorem{DEF}[THM]{Definition}
\newtheorem{case}{Case}
\newtheorem{ccase}{Case}[case]
\begin{document}
\title{Toughness and prism-hamiltonicity of $P_4$-free graphs}

\author{%
	M. N. Ellingham\thanks{Supported by Simons Foundation award no. 429625.} $^{\dagger}$%
	\qquad Pouria Salehi Nowbandegani$^{\dagger}$  \qquad Songling Shan$^{\ddagger}$\\
$^{ \dagger}$ Department of Mathematics, 1326 Stevenson Center,\\
	Vanderbilt University, Nashville, TN 37240\\
$^{\ddagger}$ Department of Mathematics, Illinois State University,\\
Normal, IL 61790\\
	\texttt{mark.ellingham@vanderbilt.edu}\\
	\qquad
	\texttt{pouria.salehi.nowbandegani@vanderbilt.edu}\\
	\qquad
	\texttt{sshan12@ilstu.edu}\\
}

\date{6 January 2019}
\maketitle

 \begin{abstract}
 The  \emph{prism} over  a graph $G$ is the product $G \Box K_2$, i.e.,  the graph obtained
 by taking two copies of $G$ and adding a perfect matching joining the two
 copies of each vertex by an edge.
 The graph $G$ is called \emph{prism-hamiltonian} if it has a hamiltonian prism.
 Jung showed that every $1$-tough $P_4$-free graph with at least three vertices is hamiltonian. In this paper, we extend this to observe that for $k \geq 1$ a $P_4$-free graph has a spanning \emph{$k$-walk} (closed walk using each
 vertex at most $k$ times) if and only if it is $\frac{1}{k}$-tough. As our main result, we show that for the class of $P_4$-free graphs, the three properties of being prism-hamiltonian, having a spanning $2$-walk, and being $\frac{1}{2}$-tough are all equivalent.
 
 \smallskip
 \noindent
 \textbf{Keywords:} Toughness,  Prism-hamiltonicity, $P_4$-free graph.

 \end{abstract}

	
\section{Introduction}

	All graphs considered are simple and finite. Let $G$ be a graph.
	For $S\subseteq V(G)$ the subgraph induced on $V(G)-S$ is denoted by
	$G-S$; we abbreviate $G-\{v\}$ to $G-v$.
	The number of components of $G$ is denoted by $c(G)$.
	The graph is said to be \emph{$t$-tough\/} for a real number
	$t \ge 0$ if $|S|\ge t\cdot c(G-S)$ for each $S\subseteq V(G)$ with $c(G-S)\ge
	2$.
	The \emph{toughness $\tau(G)$\/} is the largest real number $t$ for which
	$G$ is $t$-tough, or $\infty$ if $G$ is complete.  Positive
	toughness implies that
	$G$ is connected.  If $G$ has a hamiltonian cycle it is well known that
	$G$ is $1$-tough.
	
	In 1973, Chv\'atal~\cite{chvatal-tough-c} conjectured that for some
	constant $t_0$, every $t_0$-tough graph is hamiltonian.
	Thomassen (see \cite[p.~132]{STGT-ch6}) showed that there are
	nonhamiltonian graphs with toughness greater than $\frac{3}{2}$.
	Enomoto, Jackson, Katerinis and Saito \cite{MR785651} showed that every
	$2$-tough graph has a $2$-factor ($2$-regular spanning subgraph),
	but also for every $\varepsilon > 0$ constructed $(2-\varepsilon)$-tough graphs with no $2$-factor,
	and hence no hamiltonian cycle.
	Bauer, Broersma and Veldman \cite{Tough-CounterE} constructed
	$(\frac{9}{4}-\varepsilon)$-tough nonhamiltonian graphs for every
	$\varepsilon > 0$.  Thus, any such $t_0$ is at least $\frac{9}{4}$.
	
	There have been a number of papers on toughness conditions that
	guarantee the existence of more general spanning structures in a graph.
	A \emph{$k$-tree} is a tree with maximum degree at most $k$, and a \emph{$k$-walk} is a closed walk with each vertex repeated at most $k$ times.
	A $k$-walk can be obtained from a $k$-tree by visiting each edge of the
	tree twice. Note that a spanning $2$-tree is a hamiltonian path and if a graph has at least three vertices then a spanning $1$-walk is a hamiltonian cycle.
	Win \cite{Win-tough} showed that for $k \ge 3$, every $\frac{1}{k-2}$-tough
	graph has a spanning $k$-tree, and hence a spanning $k$-walk.
	In 1990, Jackson and Wormald made the following
	conjecture.

 \begin{CON}[Jackson and Wormald \cite{JW-k-walks}]\label{k-walkc}
 For each integer $k \ge 2$, every connected $\frac{1}{k-1}$-tough graph
 has a spanning  $k$-walk.
	\end{CON}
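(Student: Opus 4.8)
The plan is to recast the existence of a spanning $k$-walk as a connected bounded-degree factor problem and then to bound the relevant deficiency by the toughness hypothesis. First I would record the standard correspondence: a connected graph $G$ has a spanning $k$-walk if and only if the multigraph $2G$ obtained by doubling every edge of $G$ contains a connected spanning Eulerian subgraph $H$ with $2 \le d_H(v) \le 2k$ for every vertex $v$. Indeed, an Euler tour of such an $H$ is a closed spanning walk visiting $v$ exactly $d_H(v)/2 \le k$ times; conversely, a spanning $k$-walk yields such an $H$ after reducing surplus edge-traversals (a routine Eulerian parity reduction). This reframing converts the conjecture into finding a connected even spanning subgraph with a uniform degree cap $2k$, which is the natural object for an extremal exchange argument.

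Next I would isolate the necessary condition and the exact gap to be closed. The easy direction shows that any graph with a spanning $k$-walk is $\frac{1}{k}$-tough: every maximal subwalk lying inside a single component of $G-S$ is flanked by appearances of vertices of $S$, and since each vertex of $S$ is visited at most $k$ times, the number of such component-blocks, and hence $c(G-S)$, is at most $k|S|$. The conjecture asserts that the stronger hypothesis $\frac{1}{k-1}$-toughness, i.e.\ $c(G-S) \le (k-1)|S|$, is already sufficient. Since Win's theorem delivers a spanning $k$-tree (and hence a $k$-walk by doubling) already under $\frac{1}{k-2}$-toughness, the entire content of the conjecture is to weaken the requirement by one unit in the denominator, exploiting that a walk, unlike a tree, may legitimately revisit vertices and reroute.

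The core of the argument would be an extremal exchange in the style of Win. I would take a connected even spanning subgraph $H$ of $2G$ minimizing a potential such as $\sum_{v} \max\bigl(d_H(v)-2k,\,0\bigr)$, and suppose for contradiction that some vertex $v$ is oversaturated. Around $v$ I would seek a local exchange — shortcutting or rerouting a pair of edge-traversals through a neighbour or along an alternating detour — that lowers $d_H(v)$ while preserving connectedness and all degree parities. When every such exchange is blocked, the blocking configuration should exhibit a cutset $S$ (the oversaturated vertices together with their interface) for which $c(G-S) > (k-1)|S|$, contradicting $\frac{1}{k-1}$-toughness. The extra doubling freedom of walks is precisely what should let each vertex of $S$ absorb $k-1$ rather than $k-2$ components, accounting for the improved denominator. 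The technical heart would be a deficiency version of a connected $[2,2k]$-factor theorem, asserting that the minimum degree-deficiency is bounded by
\[
\max_{S \subseteq V(G)} \bigl( c(G-S) - (k-1)|S| \bigr),
\]
which vanishes exactly under the conjectured toughness.

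The hardest step — and the reason this remains a conjecture — is exactly the blocking analysis in the critical regime. At toughness precisely $\frac{1}{k-1}$ the extremal tough graphs leave no slack, so a failed exchange must be shown to produce a cut that is tight to within a single component; controlling that last component, and ensuring that a reroute never merely transfers excess degree to another vertex or severs a fragment the toughness count cannot charge, is where every known approach stalls. I would therefore expect the purely local swaps to be insufficient and to need a global augmenting structure — a $T$-join or flow decomposition certifying feasibility of the connected $[2,2k]$-factor — whose dual cut condition is matched against $c(G-S) \le (k-1)|S|$. Establishing that duality tightly, uniformly in $k$, is the genuine obstacle, which is why in this paper we instead settle the conjecture only for the restricted class of $P_4$-free graphs, where the rigid structure of cographs lets us even sharpen the threshold to $\frac{1}{k}$-toughness.
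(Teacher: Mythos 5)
There is a genuine gap, and it is unavoidable: the statement you were asked to prove is Conjecture~\ref{k-walkc}, which this paper does not prove and which (to the authors' knowledge) remains open. Your proposal is honest about this, but as a proof it fails at exactly the step you flag. The reduction to a connected even spanning subgraph $H$ of the doubled multigraph with $2 \le d_H(v) \le 2k$ is fine, and your necessity argument (a spanning $k$-walk forces $\frac{1}{k}$-toughness) correctly reproduces Jackson and Wormald's Lemma~2.1(i) as cited in the paper. But the core claim --- that when every local exchange at an oversaturated vertex is blocked, the blocking configuration yields a cutset $S$ with $c(G-S) > (k-1)|S|$ --- is asserted with ``should,'' not proved. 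The ``deficiency version of a connected $[2,2k]$-factor theorem'' you invoke does not exist: degree-constrained factor theorems (Tutte/Lov\'asz-type, or flow formulations) come with exact deficiency formulas precisely because connectivity is \emph{not} required; imposing connectedness destroys that duality, which is why Win's theorem and all its refinements lose ground in the denominator. Supplying that missing duality is not a technical detail to be filled in; it is the entire content of the conjecture.

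For contrast, what the paper actually establishes is the conjecture restricted to $P_4$-free graphs, in the sharper form of Corollary~\ref{kwalk}: a $P_4$-free graph has a spanning $k$-walk if and only if it is $\frac{1}{k}$-tough (note $\frac{1}{k}$, not $\frac{1}{k-1}$). The route is entirely different from your exchange scheme and is special to cographs: Jackson and Wormald's equivalence that $G$ has a spanning $k$-walk if and only if the lexicographic product $G[K_k]$ is hamiltonian, Goddard and Swart's identity $\tau(G[K_k]) = k\,\tau(G)$, the observation that $G[K_k]$ is again $P_4$-free, and finally Jung's Theorem~\ref{ham-tough} that a $1$-tough $P_4$-free graph on at least three vertices is hamiltonian. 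None of these ingredients has an analogue for general graphs, so this gives no traction on Conjecture~\ref{k-walkc} itself; if you want to pursue your program, the place to aim is a connectivity-respecting deficiency theorem, and you should expect the $\frac{9}{4}$-type lower-bound constructions of Bauer, Broersma and Veldman (and the $k$-walk analogues in \cite{JW-k-walks}) to constrain what form it can take.
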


	The \emph{prism} over a graph $G$ is the Cartesian product $G \Box K_2$. If $G \Box K_2$ is hamiltonian, we say that $G$ is \emph{prism-hamiltonian}.
	Kaiser et al.~\cite{Kaiser} showed that existence of a
hamiltonian path implies prism-hamiltonicity, which in turn implies
existence of a spanning 2-walk. They gave  examples showing that none of
these implications can be reversed. They also made the following
conjecture, which is analogous to those of Chv\'atal and of Jackson and
Wormald.

\begin{CON}[Kaiser et al.~\cite{Kaiser}]\label{prism}
	There exists a constant $t_1$ such that the prism over any $t_1$-tough graph is hamiltonian.
\end{CON}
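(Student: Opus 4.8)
My plan is to force prism-hamiltonicity through a spanning $2$-factor rather than through hamiltonicity of $G$ itself; since prism-hamiltonicity lies strictly between having a Hamilton path and having a spanning $2$-walk, one expects it to be guaranteed by a threshold $t_1$ well below the (still unknown) Chv\'atal constant, and a $2$-factor is the natural carrier. The starting point is that a hamiltonian prism can be assembled from the prisms of the cycles of a $2$-factor. If $F$ is a $2$-factor of $G$ with cycle components $C^1, \dots, C^m$, then each $C^i \Box K_2$ is a circular ladder and hence hamiltonian. The basic splicing gadget is this: a Hamilton cycle of a circular ladder uses a set of rungs (matching edges), and if $H_i$ uses the rung $x_1x_2$ and $H_j$ uses the rung $y_1y_2$, where $xy \in E(G)$ with $x \in V(C^i)$ and $y \in V(C^j)$, then deleting these two rungs and inserting the two horizontal edges $x_1y_1$ and $x_2y_2$ splices $H_i$ and $H_j$ into a single Hamilton cycle of $(C^i \cup C^j \cup \{xy\}) \Box K_2$. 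Iterating, any family of splices that links $C^1, \dots, C^m$ into one connected object produces a hamiltonian prism for $G$.

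This reduces the statement to two sub-goals. First, produce the $2$-factor: by the theorem of Enomoto, Jackson, Katerinis and Saito, any $t_1 \ge 2$ already delivers $F$. Second --- and this is the substance --- organize the splices globally. I would form the reduced multigraph $R$ whose nodes are $C^1, \dots, C^m$ and whose edges are the edges of $G$ running between distinct cycles, choose a spanning tree of $R$, and splice along its edges. A crucial constraint is that each splice at $C^i$ must occur at a rung actually used by the Hamilton cycle chosen for $C^i \Box K_2$; since the most economical Hamilton cycle of a circular ladder uses exactly two rungs, and these must sit at two \emph{adjacent} positions of the cycle (each layer must be traversed by a Hamilton path of $C^i$, whose endpoints are adjacent), the splicing tree must have maximum degree at most $2$ --- i.e.\ I must find a Hamilton path of $R$ --- and the crossing edges realizing consecutive splices at $C^i$ must land on adjacent vertices of $C^i$, or else be absorbed by a Hamilton cycle of the ladder that deliberately uses extra rungs.

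The toughness hypothesis must supply two structural inputs here. Connectivity: removing a minimum vertex cut leaves at least two components, so $\tau(G) \le \kappa(G)/2$, and taking $t_1$ large forces $G$ to be highly connected; this makes $R$ connected (so the splicing tree exists) and guarantees an abundance of crossing edges between cycles, which is what lets the adjacency-of-rungs requirement be met with room to spare. Control of the shape of the $2$-factor: toughness bounds the number of components left by any cut, which limits how many cycles $F$ can have and how they can be separated, and one would like to parlay this into a bound on $m$ and on the shortest cycle length, so that distinct and suitably placed attachment points can be chosen uniformly on every cycle.

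The main obstacle is precisely this global coordination, and it is why the statement remains a conjecture rather than a theorem. The $2$-factor theorem offers no control, as a function of toughness, over the number or lengths of the cycles of $F$ or over how the crossing edges distribute among the vertices of each cycle; a single short cycle forced to act as an internal node of the splicing path, or a pile-up of all crossing edges at one vertex, defeats the adjacent-rung bookkeeping. Converting ``highly connected with a $2$-factor'' into ``the circular ladders can be spliced along a Hamilton path of $R$ with compatible attachment points'' is the hard core. The realistic route to an explicit $t_1$ is therefore to first prove a strengthened factor theorem: for $t$-tough $G$, produce a $2$-factor whose reduced graph $R$ admits a Hamilton path (or at least a bounded-degree spanning tree) realizable by crossing edges with the required adjacency at each cycle; the splicing gadget above would then close the argument.
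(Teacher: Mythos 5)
You were asked about Conjecture~\ref{prism}, which is exactly that: a conjecture of Kaiser, Ryj\'a\v{c}ek, Kr\'al, Rosenfeld and Voss~\cite{Kaiser} that this paper does not prove and that remains open. The paper only establishes the analogous statement restricted to $P_4$-free graphs, with constant $\frac{1}{2}$ (Theorem~\ref{1/2-tough}), and records the lower bound $t_1 \ge \frac{9}{8}$ for the general conjecture. So there is no proof in the paper to compare yours against; the only question is whether your argument settles the conjecture, and it does not --- as you yourself concede in your final paragraph.

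The gap is where you locate it, but it is worth stating how total it is: everything you actually prove (the splicing gadget, and the existence of a $2$-factor for $t_1 \ge 2$ via Enomoto--Jackson--Katerinis--Saito~\cite{MR785651}) was already known, and the remaining step is the entire content of the conjecture. Two specific corrections to your plan. First, the ``Hamilton path of $R$'' constraint is not really the issue: a circular ladder $C_n \Box K_2$ has a Hamilton cycle using any prescribed nonempty even set of rungs, so a spanning tree of $R$ of arbitrary maximum degree can be spliced, \emph{provided} its edges incident to each cycle $C^i$ attach at pairwise distinct vertices of $C^i$. That distinctness is the real obstruction: if every crossing edge leaving $C^i$ ends at one vertex of $C^i$, only one splice is possible there, and no choice of rungs repairs it. Second, your hope that toughness ``limits how many cycles $F$ can have'' and bounds the shortest cycle length is false as stated --- $K_{3m}$ has unbounded toughness and a $2$-factor consisting of $m$ triangles --- so the needed statement is about the existence of \emph{some} well-behaved $2$-factor, which no known strengthening of the $2$-factor theorem provides. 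What survives of your plan is the reduction: it suffices to find a spanning subgraph whose blocks are even cycles or edges, with each vertex in at most two blocks. That is precisely the paper's notion of an SBEP subgraph (Definition~\ref{def:SBEP}), and its Lemma~\ref{comb-sbep-lem} and Theorem~\ref{SBEP} are your gadget in that guise; but the paper can construct such a spanning subgraph only under the extra hypothesis of $P_4$-freeness, by induction on tough-sets (Lemmas~\ref{tough-set1} and~\ref{tough-set}), not from toughness alone.
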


Kaiser et al.~also showed that  $t_1$  must be at least $\frac{9}{8}$. 

Our goal is to investigate the conjectures above for $P_4$-free graphs,
which have no induced subgraph isomorphic to a $4$-vertex path.
$P_4$-free graphs are also known as \emph{cographs}.
 Connected $P_4$-free graphs can have arbitrarily low or high toughness:
 $K_m + nK_1$ (where `$+$' denotes join) with $m, n \ge 1$ is
$P_4$-free and has toughness $m/n$ if $n \ge 2$, and $\infty$ if $n =
1$.
 The following
result of Jung shows that Chv\'atal's
conjecture holds for $P_4$-free graphs.

\begin{THM}[Jung {\cite[Theorem 4.4(2)]{MR0491356}}] \label{ham-tough}
	Every $P_4$-free graph with at least three vertices is
hamiltonian if and only if it is $1$-tough.
 \end{THM}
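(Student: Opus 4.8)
The plan is to prove the two directions separately. The forward direction---that a hamiltonian graph on at least three vertices is $1$-tough---is standard and holds for all graphs: deleting $|S|$ vertices from a hamiltonian cycle leaves at most $|S|$ subpaths, so $c(G-S)\le |S|$ whenever $S\neq\emptyset$. Thus all the content lies in the reverse direction, and I would use the cograph structure throughout. Recall that a connected $P_4$-free graph $G$ with at least two vertices has disconnected complement, so it decomposes as a join $G=A_1+A_2+\cdots+A_k$ of its co-components (the complements of the components of $\overline{G}$), with $k\ge 2$; each $A_i$ is again $P_4$-free and is either a single vertex or disconnected. I write $n_i=|V(A_i)|$ and $n=\sum_i n_i$. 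A tempting first idea is to invoke the Chv\'atal--Erd\H{o}s theorem, but this fails: the graph $(C_4\cup C_4)+K_3$ is $\tfrac32$-tough yet has $\kappa=3<4=\alpha$, so $1$-toughness does not yield $\kappa\ge\alpha$ for cographs. Instead I would build the cycle explicitly from the join structure.

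The engine is a reduction of hamiltonicity of a join to a balanced path-cover condition. Let $\pi(H)$ denote the minimum number of vertex-disjoint paths needed to cover $V(H)$; note that $\pi(H_1\cup H_2)=\pi(H_1)+\pi(H_2)$ and that every value $r$ with $\pi(H)\le r\le |V(H)|$ is realizable as a path-cover size. Traversing a hamiltonian cycle of $G=A_1+\cdots+A_k$ and cutting it into maximal runs lying in a single co-component shows that a hamiltonian cycle is exactly a cyclic arrangement of such runs in which consecutive runs come from different co-components; conversely, any such arrangement, joined using the complete cross-edges, is a hamiltonian cycle. Hence $G$ is hamiltonian if and only if there are integers $r_i$ with $\pi(A_i)\le r_i\le n_i$ and $\max_i r_i\le \sum_{j\ne i} r_j$ (the condition for arranging $\sum_i r_i\ge 2$ colored runs around a circle with no two equal colors adjacent). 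When $k=2$ this collapses to the clean criterion $\max(\pi(A_1),\pi(A_2))\le \min(n_1,n_2)$, and I would reduce the general case to two parts by grouping the co-components, since the join is associative.

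The crux---and the step I expect to be the main obstacle---is showing that $1$-toughness guarantees a feasible choice of the $r_i$ (equivalently, a bipartition of the co-components meeting the two-part criterion). The key is to control $\pi(A_i)$. Since each $A_i$ is disconnected, $\pi(A_i)=\sum_j \pi(B_{i,j})$ over its components, and I expect to prove, by induction on the cotree, that for $P_4$-free graphs the path-cover number equals the scattering number $\max\{1,\max_S(c(H-S)-|S|)\}$; this is precisely the quantity that $1$-toughness controls. Any vertex set disconnecting $G$ must contain all but one co-component, which yields the toughness inequalities $(n-n_{i_0})+|S'|\ge c(A_{i_0}-S')$ for $S'\subseteq V(A_{i_0})$; with a min--max formula of this type in hand, the balance inequality $\max_i r_i\le \sum_{j\ne i} r_j$ should follow directly from these.

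The delicate point, which is where I expect the real work to concentrate, is the case in which a single co-component $A_{i_0}$ is larger than $n/2$ (as in the example above). There one cannot simply take all $r_i=n_i$, since $A_{i_0}$ would dominate; instead I would lower $r_{i_0}$ toward $\pi(A_{i_0})$ while keeping the remaining $r_j$ balanced, and the heart of the argument will be verifying that $1$-toughness always leaves exactly enough room to do this. Once the feasible $r_i$ are produced, partitioning each $A_i$ into $r_i$ paths and splicing them around the circle finishes the construction, and combined with the easy forward direction this establishes the stated equivalence.
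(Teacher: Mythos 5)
The paper does not actually prove this statement: it is quoted from Jung \cite{MR0491356} and used as a black box (both directly and, via the lexicographic product, in Corollary \ref{kwalk}), so there is no in-paper argument to compare yours against. Judged on its own, your sketch follows the standard modular-decomposition route and its skeleton is sound: the forward direction is standard; a connected cograph on at least two vertices is indeed a join of co-components; hamiltonicity of the join $A_1+\cdots+A_k$ is correctly reduced to choosing integers $r_i$ with $\pi(A_i)\le r_i\le n_i$ and $2\max_i r_i\le\sum_i r_i$; and your Chv\'atal--Erd\H{o}s caveat is accurate. The one load-bearing claim you assert without proof is the identity $\pi(H)=\max\{1,\max_S(c(H-S)-|S|)\}$ for cographs: the inequality $\ge$ is trivial, but $\le$ requires its own induction on the cotree (handling the join case, where a disconnecting set must swallow all but one co-component), and this is where essentially all of the content of Jung's theorem lives, so ``I expect to prove'' cannot stand in for it. By contrast, the step you single out as the delicate one is routine once that lemma is available: applying $1$-toughness to $S=(V(G)-V(A_{i_0}))\cup S'$ gives $c(A_{i_0}-S')-|S'|\le n-n_{i_0}$ for every $S'$, hence $\pi(A_{i_0})\le n-n_{i_0}$, and taking $r_{i_0}=\min(n_{i_0},\,n-n_{i_0})$ with $r_j=n_j$ for $j\ne i_0$ satisfies the balance condition directly, since $n_j\le n-n_{i_0}$ for all $j\ne i_0$. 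In short: the approach is viable and essentially reconstructs the classical argument, but the path-cover/scattering-number lemma is the real theorem and remains unproved in your write-up.
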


The following corollary of Theorem \ref{ham-tough} shows that a stronger version
of Conjecture \ref{k-walkc} holds for $P_4$-free graphs. The \emph{composition} or \emph{lexicographic product} of  graphs $H$ and $K$, denoted by $H[K]$, is defined as the graph with vertex set $V(H)\times V(K)$ and edge set $\{ (u_1,v_1)(u_2,v_2): u_1u_2\in E(H) \, \mbox{or}\, u_1=u_2 \, \mbox{and}\, v_1v_2\in E(K)\}$.

\begin{COR} \label{kwalk}
	Let $k\geq 1$ be a positive integer. Then a $P_4$-free graph has a spanning $k$-walk if and only if it is $\frac{1}{k}$-tough.
\end{COR}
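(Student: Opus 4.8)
My plan is to route everything through the lexicographic product $G[K_k]$ and then invoke Jung's theorem (Theorem~\ref{ham-tough}). The backbone is the chain of equivalences, valid for a $P_4$-free graph $G$ with $|V(G[K_k])|\ge 3$:
\[
G \text{ has a spanning } k\text{-walk} \iff G[K_k] \text{ is hamiltonian} \iff G[K_k] \text{ is } 1\text{-tough} \iff G \text{ is } \tfrac1k\text{-tough}.
\]
The middle equivalence is exactly Theorem~\ref{ham-tough} applied to $G[K_k]$, which is legitimate because the class of $P_4$-free graphs is closed under the lexicographic product with $K_k$; I would record this closure as a one-line observation (it follows from the cotree characterization of cographs, since $G[K_k]$ is obtained from the cograph $G$ by substituting a copy of the cograph $K_k$ at each vertex). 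It then remains to prove the two outer equivalences and to dispose of the degenerate cases where $G[K_k]$ has fewer than three vertices, namely $(k,|V(G)|)\in\{(1,1),(1,2),(2,1)\}$, by direct inspection; these parallel the ``at least three vertices'' hypothesis of Theorem~\ref{ham-tough}.

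For the first equivalence I would fix the projection $\pi$ collapsing the $k$ copies $v_1,\dots,v_k$ of each vertex $v$ back to $v$. Given a hamiltonian cycle of $G[K_k]$, reading off $\pi$ along the cycle and then suppressing any immediate repetitions (which can only arise when the cycle traverses an edge inside one of the cliques $\{v_1,\dots,v_k\}$) yields a closed walk of $G$ that meets every vertex and meets each $v$ at most $k$ times, i.e.\ a spanning $k$-walk. Conversely, given a spanning $k$-walk I would lift it: assign the (at most $k$) visits to $v$ to \emph{distinct} copies of $v$, and splice in the remaining, unused copies of $v$ as a short detour through the clique on $\{v_1,\dots,v_k\}$ at one of the visits to $v$; this is possible precisely because those $k$ copies form a clique in $G[K_k]$. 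Tracking adjacencies (each step of the walk uses an edge $vw$ of $G$, hence lifts to an edge between copies) shows the result is a hamiltonian cycle.

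For the last equivalence, the crucial point --- and the reason the \emph{complete} graph $K_k$ is the correct blow-up --- is that for every $S\subseteq V(G)$ the graph $(G-S)[K_k]$ has exactly as many components as $G-S$: even a singleton component $\{v\}$ of $G-S$ blows up to the connected clique on the surviving copies of $v$, rather than shattering into $k$ independent vertices (which is what would happen with $\overline{K_k}$, and is exactly why the edgeless blow-up fails here). Concretely, given a separating set $T$ of $G[K_k]$, I would set $S=\{v : \text{all } k \text{ copies of } v \text{ lie in } T\}$ and check that $c(G[K_k]-T)=c(G-S)$ while $|T|\ge k|S|$; hence $1$-toughness of $G[K_k]$ forces $k|S|\ge c(G-S)$ for every cut with $c(G-S)\ge 2$, which is $\tfrac1k$-toughness of $G$. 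The reverse implication is the same computation run with $T=S\times\{1,\dots,k\}$. The step I expect to require the most care is exactly this toughness equivalence: one must verify that the component count is genuinely preserved under the $K_k$-blow-up for \emph{all} separating sets, that passing to the fully deleted set $S$ loses no components, and that the inequalities point the right way, including the honest treatment of singleton components of $G-S$. Everything else is bookkeeping, but this is where the argument lives and where the choice of $K_k$ over an edgeless blow-up is essential.
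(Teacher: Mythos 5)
Your proposal is correct and follows essentially the same route as the paper: reduce to hamiltonicity of the $P_4$-free graph $G[K_k]$ and apply Jung's theorem (Theorem~\ref{ham-tough}), the only difference being that you supply direct proofs of the two facts the paper simply cites, namely the Jackson--Wormald equivalence between spanning $k$-walks in $G$ and hamiltonian cycles in $G[K_k]$, and the Goddard--Swart identity $\tau(G[K_k])=k\,\tau(G)$. One small slip: in your toughness computation the two directions are labeled backwards (starting from an arbitrary separating set $T$ of $G[K_k]$ and passing to $S$ is what shows that $\tfrac{1}{k}$-toughness of $G$ implies $1$-toughness of $G[K_k]$, while taking $T=S\times\{1,\dots,k\}$ gives the converse), but both computations are present and correct.
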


\begin{proof}
	For necessity, Jackson and Wormald \cite[Lemma
2.1(i)]{JW-k-walks} showed that every graph with a spanning $k$-walk is
$\frac{1}{k}$-tough.  So we just show sufficiency.

	The statement is true for graphs on one or two vertices (note that in those cases a spanning
$1$-walk is not a hamiltonian cycle). Hence, we may assume that $G$ has at least three vertices. Also, we may assume that $k \geq 2$, since the statement is true for $k=1$ by Theorem \ref{ham-tough}. 

Jackson and Wormald~\cite{JW-k-walks} showed that $G$ has a spanning $k$-walk if and only if $G[K_k]$ has a hamiltonian cycle. Now suppose $G$ is a $\frac{1}{k}$-tough $P_4$-graph. It is an easy observation that  $G[K_k]$ is $P_4$-free. Goddard and
	Swart \cite[Theorem 6.1(b)]{Goddard90} showed that $\tau(G[K_k]) = k \tau (G)$, so $\tau(G[K_k]) \geq 1$, and hence $G[K_k]$ is hamiltonian by Theorem \ref{ham-tough}. Therefore, $G$ has a spanning $k$-walk using Jackson and Wormald's result. 
\end{proof}

\begin{THM} \label{1/2-tough}
	A $P_4$-free graph with at least two vertices is prism-hamiltonian if and only if it is $\frac{1}{2}$-tough.
\end{THM}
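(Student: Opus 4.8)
The plan is to prove the two directions separately, with necessity following immediately from results quoted above and sufficiency requiring a structural construction. For the forward direction I would combine two facts already available: by the result of Kaiser et al.\ a prism-hamiltonian graph has a spanning $2$-walk, and by the Jackson--Wormald bound invoked in the proof of Corollary~\ref{kwalk} (their Lemma 2.1(i)) any graph with a spanning $2$-walk is $\frac12$-tough. Chaining these gives prism-hamiltonian $\Rightarrow$ $\frac12$-tough for \emph{every} graph, using nothing about $P_4$-freeness. Together with the $k=2$ case of Corollary~\ref{kwalk}, which says that for $P_4$-free graphs $\frac12$-toughness is exactly equivalent to having a spanning $2$-walk, this will also record the full three-way equivalence promised in the abstract, so the only remaining content is the reverse implication.

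The real work is to show a $\frac12$-tough $P_4$-free graph $G$ on at least two vertices is prism-hamiltonian, and here I would exploit the recursive structure of cographs. A connected cograph with at least two vertices is a join $G = H_1 + \cdots + H_r$ with $r\ge 2$ of its co-components, each $H_i$ being a single vertex or a disconnected cograph, and in a join the only way to obtain $c(G-S)\ge 2$ is to delete every part but one. Translating $\frac12$-toughness through this description shows that the binding constraints arise from deleting all parts except one $H_j$ and then shattering $H_j$, and that the extremal configurations are the graphs $K_m + \overline{K_n}$ with $n \le 2m$, where $m$ clique vertices must ``absorb'' up to $2m$ independent vertices. This strongly suggests an induction on the cotree in which prism structures of the parts are stitched together across the top join.

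I would first settle the extremal case $G = K_m + \overline{K_n}$, $n\le 2m$, since it exposes the mechanism. A degree count in the prism shows that in the tight case $n=2m$ every independent vertex is forced to use its vertical (matching) edge; each such vertex then contributes a \emph{vertical detour} $c_k^{\,i} - v^{\,i} - v^{\,1-i} - c_l^{\,1-i}$ linking a layer-$i$ clique copy to a layer-$(1-i)$ clique copy. Because the $2m$ clique copies induce the prism over $K_m$ (hamiltonian for $m\ge 2$, a single edge for $m=1$), they provide $2m\ge n$ anchor points into which these detours can be inserted, yielding a single Hamilton cycle once the layer parities and clique indices are matched up (a routine check). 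For general $G$ I would use this as the model for the induction step: formulate a strengthened hypothesis supplying enough flexibility — for instance a prism Hamilton \emph{path} between two prescribed vertices, or a prism Hamilton cycle guaranteed to traverse certain prescribed vertical edges — build such structures on the parts, and combine them using the complete bipartite connections of the join, routing the vertices of any sparse part through vertical detours anchored at a denser part.

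The hard part will be the join-combination step. I must ensure the pieces assemble into a single cycle rather than several, and that the counting works so that each vertex of a sparse part can be anchored at a distinct slot of a denser part; this is exactly where $\frac12$-toughness (the bound $n\le 2m$ in the extremal case) is consumed. The delicate point is to choose an inductive statement strong enough to be recombined at a join yet still true at the union nodes produced inside each co-component, since the individual parts $H_i$ need not themselves be $\frac12$-tough. As a fallback viewpoint I would keep the spanning-$2$-walk reformulation in reserve: a $\frac12$-tough cograph has a spanning $2$-walk by Corollary~\ref{kwalk}, and the remaining task can be phrased as lifting such a walk to a Hamilton cycle of the prism by assigning the at most two visits of each vertex consistently to the two layers, which the join structure should make possible.
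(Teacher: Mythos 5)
Your necessity argument is fine and is essentially the paper's: prism-hamiltonicity gives a spanning $2$-walk (Kaiser et al.), and a spanning $2$-walk forces $\frac{1}{2}$-toughness (Jackson--Wormald). Your sufficiency plan, however, takes a genuinely different route from the paper --- induction on the cotree, stitching prism structures of the co-components across the top-level join --- and it contains a real gap: the inductive statement, which you yourself identify as ``the delicate point,'' is never formulated, and both candidates you float would fail. A prism Hamilton path (or cycle) of a co-component between prescribed vertices cannot exist in general, because a non-singleton co-component $H_i$ is a \emph{disconnected} cograph, so $H_i \Box K_2$ is disconnected (e.g.\ $\overline{K_n} \Box K_2 = nK_2$); whatever structure you carry through the union nodes of the cotree cannot live inside the prism of a single part. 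Your fallback --- take the spanning $2$-walk guaranteed by Corollary~\ref{kwalk} and ``lift'' it by assigning visits to layers --- also fails as stated, since Kaiser et al.\ showed that having a spanning $2$-walk does \emph{not} imply prism-hamiltonicity; only $2$-walks of a special shape lift, and identifying that shape is exactly the missing content.

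The paper resolves precisely this difficulty by introducing an intermediate certificate: a spanning SBEP subgraph (Definition~\ref{def:SBEP}: every block an edge or an even cycle, every vertex in at most two blocks). Theorem~\ref{SBEP} shows any SBEP graph is prism-hamiltonian (a cycle-by-cycle lift with prescribed vertical edges at single-block vertices --- the ``flexible'' statement you were looking for), and Theorem~\ref{1/2-tough2} shows every $\frac{1}{2}$-tough connected cograph has a spanning SBEP subgraph. Moreover, the paper's induction is not on the cotree but on $|V(G)|$ via a maximal tough-set $S$: it contracts a nontrivial component $R$ of $G-S$ and proves (Lemma~\ref{tough-set}\eqref{tough-contraction}--\eqref{tough-contraction-whole}) that both $R$ and the contracted graph remain $\frac{1}{2}$-tough, then splices the two SBEP subgraphs together (Lemma~\ref{comb-sbep-lem}); when all components of $G-S$ are singletons it extracts a complete tripartite configuration (Lemma~\ref{tough-set1}\eqref{com-bipar}) and handles your extremal case via Lemma~\ref{sbep-bipartite}. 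This toughness-transfer machinery is what substitutes for the property of the parts that your cotree induction needs but does not supply. Your extremal computation for $K_m + \overline{K_n}$, $n \le 2m$, is correct and matches Lemma~\ref{sbep-bipartite} combined with Theorem~\ref{SBEP}, but as it stands the general induction step of your proposal is a plan, not a proof.
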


Jung's result, Theorem \ref{ham-tough},  also confirms that sufficiently
tough $P_4$-free graphs are prism-hamiltonian. However, we show that a
weaker toughness condition is both necessary and sufficient, and it is
the same toughness condition required for $P_4$-free graphs to have a
spanning $2$-walk.
 In a similar way, two of the authors (Ellingham and
Salehi Nowbandegani) \cite{ES} showed that for general graphs having a
spanning $2$-walk and being prism-hamiltonian require  the same
Chv\'{a}tal-Erd\H{o}s condition.
 Note that if $G$ is $P_4$-free, $G \Box K_2$ is not in general
$P_4$-free, so Theorem \ref{ham-tough} cannot directly provide a
necessary and sufficient condition for a $P_4$-free graph to be
prism-hamiltonian.

The following is a simple corollary of Theorem \ref{1/2-tough} and Corollary \ref{kwalk}. 

\begin{COR}\label{threeproperties}
In the class of $P_4$-free graphs with at least two vertices, the properties of being prism-hamiltonian, having a spanning $2$-walk, and being $\frac{1}{2}$-tough are equivalent.
\end{COR}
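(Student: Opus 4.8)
The plan is to obtain the three-way equivalence simply by transitively chaining the two equivalences already available in the excerpt, both of which I am entitled to assume. Fix a $P_4$-free graph $G$ on at least two vertices. First, Theorem~\ref{1/2-tough} gives directly that $G$ is prism-hamiltonian if and only if it is $\frac12$-tough. Second, Corollary~\ref{kwalk}, applied with $k=2$, gives that $G$ has a spanning $2$-walk if and only if it is $\frac12$-tough. Putting these together, each of ``prism-hamiltonian'' and ``has a spanning $2$-walk'' is equivalent to the common condition ``$\frac12$-tough,'' and hence the three properties are pairwise equivalent. This is exactly the sense in which the statement is, as advertised, a simple corollary, with $\frac12$-toughness serving as the natural pivot through which the other two properties are compared.

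The only point requiring any care is matching the hypotheses so that all three equivalences are asserted over the same class of graphs. Theorem~\ref{1/2-tough} and the present corollary are both phrased for $P_4$-free graphs with at least two vertices, whereas Corollary~\ref{kwalk} is stated for all orders, with the one- and two-vertex cases handled separately in its proof (where a spanning $1$-walk is deliberately not required to be a hamiltonian cycle). I would therefore restrict attention throughout to the $n \ge 2$ regime, confirm that Corollary~\ref{kwalk} with $k=2$ specializes correctly there, and then read off the equivalence. The smallest case is instructive: for $n=2$ the graph $G=K_2$ is complete, hence $\frac12$-tough, prism-hamiltonian (its prism is $C_4$), and admits a spanning $2$-walk, so all three conditions hold simultaneously, as they must.

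No genuine obstacle arises at the level of the corollary itself: the entire mathematical content has been pushed into Theorem~\ref{1/2-tough} — whose nontrivial direction, that $\frac12$-toughness of a $P_4$-free graph forces a hamiltonian prism, is where the real work lies — together with the already-proved Corollary~\ref{kwalk}. One could instead try to chain the underlying implications without passing through toughness (for instance, prism-hamiltonicity implies a spanning $2$-walk by the result of Kaiser et al.), but closing the loop in that direction still relies on the hard content of Theorem~\ref{1/2-tough}, so the toughness-pivot presentation is the cleanest and is the one I would adopt. Once the two cited results are in hand, the proof reduces to a one-line transitivity argument together with the hypothesis bookkeeping above.
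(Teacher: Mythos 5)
Your proof is correct and is essentially the paper's approach: the corollary is presented there too as an immediate consequence of Theorem~\ref{1/2-tough} and Corollary~\ref{kwalk}. The only cosmetic difference is that the paper closes the equivalence cycle with the two cheap implications (prism-hamiltonicity implies a spanning $2$-walk, and the subgraph corresponding to a $2$-walk is $\frac{1}{2}$-tough), thereby needing only the necessity direction of Corollary~\ref{kwalk}, whereas you pivot through $\frac{1}{2}$-toughness using both directions of Corollary~\ref{kwalk}; both routes are valid and equally short.
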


To confirm the above result we just need to note that the subgraph corresponding to any $2$-walk is  $\frac{1}{2}$-tough,
 and prism-hamiltonicity implies the existence of a spanning $2$-walk.

The proof of Theorem~\ref{1/2-tough} uses an inductive approach, which in general is hard to do for showing results based on toughness.  In Section 2,
we develop tools for proving Theorem~\ref{1/2-tough}, which is then proved in the last section.


We conclude this section with a remark on algorithms.  Corneil, Lerchs
and Stewart Burlingham \cite{CLB1981} showed that hamiltonicity can be
determined in polynomial time for a $P_4$-free graph $G$.  Determining
whether $G$ has a spanning $k$-walk amounts to determining whether the
$P_4$-free graph $G[K_k]$ is hamiltonian.  Every connected $n$-vertex
graph has a spanning $(n-1)$-tree and hence a spanning $(n-1)$-walk, so
we only need to check $G[K_k]$ if $k \le n-2$, and this can be done in
time polynomial in $n$.
 Therefore, determining, for a given $P_4$-free graph $G$ and positive
integer $k$, whether $G$ has a spanning $k$-walk can be done in
polynomial time.
 By Corollary \ref{threeproperties}, determining whether $G$ is
prism-hamiltonian can also be done in polynomial time.

\section{Preliminary results}

In this section, we provide some lemmas for proving Theorem~\ref{1/2-tough}. We define a class of graphs which (when they occur as spanning subgraphs) form a subclass of the SEEP-subgraphs introduced by Paulraja \cite{MR1217391} for finding hamiltonian cycles in prisms.

\begin{DEF}\label{def:SBEP}
	A \emph{simple block EP (SBEP)} graph  $H$ is a connected graph with the following  properties:
	\begin{enumerate}[(i)]
			
			\item each block of $H$ is either an even cycle or an edge, and
		\item each vertex of $H$ is contained in at most
		two blocks.
	\end{enumerate}
\end{DEF}

The edges of an SBEP graph are partitioned into cutedges and cycle edges, and the vertices of an SBEP graph are partitioned into cutvertices and single-block vertices. Note that any SBEP graph has at least two single-block vertices (at least
one in each leaf block, if there are two or more blocks). The following  lemma lets us build a new SBEP subgraph from two given SBEP subgraphs.
 

 \begin{LEM}\label{comb-sbep-lem}
  Suppose $H_1$ and $H_2$ are disjoint SBEP subgraphs of a graph $G$, 
with $x_1y_1 \in E(H_1)$, $x_2y_2 \in E(H_2)$, and $x_1 y_2, x_2 y_1 \in 
E(G)$.
  Then there is an SBEP subgraph $H$ of $G$ with $V(H) = V(H_1) \cup 
V(H_2)$.
  \end{LEM}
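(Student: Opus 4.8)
The plan is to fuse $H_1$ and $H_2$ into a single even cycle through the two prescribed edges, using the cross edges $x_1y_2$ and $x_2y_1$. For $i\in\{1,2\}$, let $B_i$ be the block of $H_i$ containing $x_iy_i$; since $H_i$ is SBEP, $B_i$ is either the edge $x_iy_i$ or an even cycle through it. In both cases I can read off an $x_i$–$y_i$ path $P_i$ of \emph{odd} length with $V(P_i)=V(B_i)$: take $P_i=x_iy_i$ if $B_i$ is an edge, and $P_i=B_i-x_iy_i$ (an even cycle minus one edge, hence an odd path) otherwise. As $H_1$ and $H_2$ are disjoint, the four vertices $x_1,y_1,x_2,y_2$ are distinct, so
\[
C' = x_1\,P_1\,y_1\,x_2\,P_2\,y_2\,x_1,
\]
obtained by joining the two paths with the cross edges $x_2y_1$ and $x_1y_2$, is a genuine cycle of length $|E(P_1)|+|E(P_2)|+2$, which is even. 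I then set $V(H)=V(H_1)\cup V(H_2)$ and $E(H)=(E(H_1)\setminus\{x_1y_1\})\cup(E(H_2)\setminus\{x_2y_2\})\cup E(C')$. Every edge of $H$ lies in $G$, so $H$ is a subgraph of $G$ with the prescribed vertex set, and it remains only to check that $H$ is SBEP.

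The main step is to pin down the block structure of $H$, for which the key identity is $H-E(C')=(H_1-E(B_1))\cup(H_2-E(B_2))$. Since a block is a maximal $2$-connected piece, no two distinct vertices of $B_i$ can be joined by a path avoiding $E(B_i)$ (such a path, together with $B_i$, would enlarge the block); hence each component of $H_i-E(B_i)$ meets $V(B_i)$ in exactly one vertex, and therefore each component of $H-E(C')$ meets $V(C')$ in exactly one vertex. This is exactly the situation in which the cycle $C'$ is a single block of $H$: any vertex outside $V(C')$ lies in a component attached to $C'$ at a single vertex, so it cannot be joined to $C'$ by two internally disjoint paths. Consequently the blocks of $H$ are $C'$ together with all blocks of $H_1$ and $H_2$ other than $B_1$ and $B_2$. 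In particular $H$ is connected (each component attaches to the connected cycle $C'$), and every block of $H$ is an edge or an even cycle, because $C'$ is even and the surviving blocks already were.

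It then remains to verify condition (ii), that each vertex lies in at most two blocks. A vertex $v\notin V(C')$ retains precisely the blocks it had in its $H_i$, so it lies in at most two. A vertex $v\in V(C')=V(B_1)\cup V(B_2)$ was, in its $H_i$, contained in $B_i$ together with at most one further block; in $H$ that copy of $B_i$ has been replaced by $C'$ while the other block (if any) survives intact, and the cross edges at $x_1,y_1,x_2,y_2$ are internal to $C'$, so $v$ again lies in at most two blocks. Hence $H$ meets both requirements of Definition~\ref{def:SBEP}.

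I expect the genuinely delicate point to be this last verification rather than the construction itself: the ``at most two blocks'' hypothesis on $H_1$ and $H_2$ is exactly what guarantees that absorbing $B_i$ into the new cycle $C'$ does not push any vertex into a third block. The odd length of the paths $P_i$ is what forces $C'$ to be even, and the block identity above is what keeps every other block of the two inputs untouched; together these reduce the whole verification to the single‑cutvertex attachment structure of blocks in an SBEP graph.
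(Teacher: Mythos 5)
Your construction produces exactly the same graph $H$ as the paper's proof (which splits into three cases according to whether $x_1y_1$ and $x_2y_2$ are cutedges or cycle edges); your uniform phrasing via odd $x_i$--$y_i$ paths spanning the blocks $B_i$ is just a cleaner packaging of that case analysis, and your parity and block-structure verifications are correct. So the proposal is correct and takes essentially the same approach, merely supplying more detail on why $C'$ becomes a single block and why no vertex ends up in a third block.
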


\begin{proof}

Each edge $x_1y_1$ or $x_2y_2$ is either a cycle edge or a cutedge. By symmetry, we consider three cases. 

If $x_1y_1$ and $x_2y_2$ are  cycle edges, then define $H=H_1\cup H_2\cup\{x_1y_2, x_2y_1\}-\{x_1y_1, x_2y_2\}$.
If $x_1y_1$ is a cutedge and $x_2y_2$ is a  cycle edge, then define $H=H_1\cup H_2\cup\{x_1y_2, x_2y_1\}-\{x_2y_2\}$.
If $x_1y_1$ and $x_2y_2$ are cutedges, then define $H=H_1\cup H_2\cup\{x_1y_2, x_2y_1\}$.

In each case the two blocks containing $x_1 y_1$ and $x_2 y_2$ are
replaced by a new block that is an even cycle, without changing the
number of blocks to which any vertex belongs.  Therefore, the result $H$
is also an SBEP subgraph.
\end{proof}

\begin{THM}\label{SBEP} 
	Every SBEP graph is prism-hamiltonian.
\end{THM}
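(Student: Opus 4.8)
The plan is to prove a slightly stronger, ``rooted'' statement by induction on the number of blocks, from which Theorem~\ref{SBEP} follows at once. Write $v'$ and $v''$ for the two copies of a vertex $v$ in $H \Box K_2$, and call each edge $v'v''$ a \emph{rung}. I would prove: if $H$ is an SBEP graph and $c$ is a single-block vertex of $H$, then $H \Box K_2$ has a Hamilton path from $c'$ to $c''$. Granting this, since any SBEP graph has a single-block vertex, I pick one such $c$, take the Hamilton path from $c'$ to $c''$, and add the rung $c'c''$ to close it into a Hamilton cycle of $H \Box K_2$; this proves the theorem.

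The base case is where $H$ is a single block, namely an even cycle or an edge, and this is where the word \emph{even} in Definition~\ref{def:SBEP} is essential. Writing $B = w_0 w_1 \cdots w_{\ell-1} w_0$ with $\ell$ even and $c = w_0$, I would exhibit the explicit ``staircase'' Hamilton cycle of $B \Box K_2$ that uses \emph{every} rung: in the gap between $w_i$ and $w_{i+1}$ use the top edge when $i$ is even and the bottom edge when $i$ is odd. One checks that each vertex then meets exactly one horizontal edge together with its rung, and that for $\ell$ even these choices close up into a single cycle. (A degree count gives that any Hamilton cycle of $C_\ell \Box K_2$ uses an even number of rungs, so for odd $\ell$ an all-rungs cycle cannot exist, which is exactly why odd cycles are excluded.) Deleting the rung $w_0'w_0''$ from this cycle yields a Hamilton path from $w_0'$ to $w_0''$ that still uses all of the rungs $w_1'w_1'', \ldots, w_{\ell-1}'w_{\ell-1}''$; the edge case $\ell = 2$ is the path $w_0' w_1' w_1'' w_0''$ in $C_4$.

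For the inductive step, suppose $H$ has at least two blocks and let $B$ be the block containing the single-block vertex $c = w_0$. Each other vertex $w_i$ of $B$ that is a cutvertex of $H$ is, by condition (ii) of Definition~\ref{def:SBEP}, contained in exactly one further block, so the ``branch'' $H_i$ consisting of all blocks on the far side of $w_i$ is an SBEP graph with strictly fewer blocks in which $w_i$ is a single-block vertex. By induction each $H_i \Box K_2$ has a Hamilton path $P_i$ from $w_i'$ to $w_i''$. Applying the base-case construction to $B$ gives a Hamilton path $Q$ of $B \Box K_2$ from $c'$ to $c''$ that contains the rung $w_i'w_i''$ for every such $i$. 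I then splice by replacing each of these rung edges in $Q$ with the corresponding branch path $P_i$. Because distinct branches meet $B$ at distinct cutvertices and are otherwise disjoint, these substitutions are independent and together cover $V(H \Box K_2)$, producing the desired Hamilton path from $c'$ to $c''$.

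The main obstacle is choosing the inductive invariant so that the splicing can be performed \emph{simultaneously} at all cutvertices of a block: the root-block prism path must contain the rung at every cutvertex at once, and securing this in the worst case, when every non-root vertex of the block is a cutvertex, is possible precisely because the block is an even cycle, via the all-rungs staircase above. The remaining care is book-keeping: checking that after splicing each shared copy $w_i'$, $w_i''$ is visited exactly once (it becomes the junction between $Q$ and the interior of $P_i$), and that the base-case configuration is a single cycle rather than several. I would also note that this is consistent with the observation, made just before Definition~\ref{def:SBEP}, that an SBEP graph is a SEEP-subgraph of itself, so the result could alternatively be deduced from Paulraja's framework.
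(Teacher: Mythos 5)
Your proof is correct and follows essentially the same route as the paper's: induction on the number of blocks, with a strengthened hypothesis guaranteeing that vertical edges (rungs) at single-block vertices are available, and the gluing step is the same operation of replacing a rung by a Hamilton path of the piece attached there. The only differences are organizational --- you state the invariant as a rooted Hamilton path from $c'$ to $c''$ and peel off the whole root block, splicing in all branches at once, where the paper keeps a Hamilton cycle through the rungs of all single-block vertices and splits at a single cutvertex into two SBEP pieces; you also spell out the all-rungs staircase cycle for the prism over an even cycle, which the paper leaves as an assertion in its base case.
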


\begin{proof}
 Let $G$ be an SBEP graph and let $H=G\Box K_2$, consisting of $G$ and a
copy $G'$ of $G$, with each $v \in V(G)$ joined to its copy $v' \in
V(G')$ by a \emph{vertical edge}.
 We show a stronger statement, that  $H$ has a hamiltonian cycle $C$ such
that each single-block vertex $v$ of $G$ and its copy $v'$ are joined by
a vertical edge of $H$ in $C$.  We show this stronger statement
inductively on the number of blocks in $G$.   
The statement holds if $G$ has a single block, i.e., $G$ is an edge or even cycle.
So we assume that $G$ has a cutvertex $x$.

By Definition \ref{def:SBEP}(ii), $x$ is contained in exactly two blocks $B_1$, $B_2$ of $G$. Hence, $G$ is the union of two connected subgraphs $G_1$
(containing $B_1$) and $G_2$ (containing $B_2$) that have only $x$ in common. Each of $G_1$ and $G_2$ is an SBEP graph in which $x$ is a single-block vertex. By induction $G_1 \Box K_2$ and $G_2 \Box K_2$ have hamiltonian cycles $C_1$ and $C_2$, respectively, using vertical edges corresponding to all
single-block vertices, including $xx'$. Now $(C_1 - xx') \cup (C_2 - xx')$ is the required hamiltonian cycle in $G \Box K_2$.
\end{proof}

Let $G$ be a graph and $S\subseteq V(G)$. The set $S$ is called a \emph{tough-set\/} of $G$ if $S$ is a cutset of $G$ and $\frac{|S|}{c(G-S)}=\tau(G)$.
Let $S$ be a cutset of $G$ and $X\subseteq S$. Define $c(G,S,X)$ to be the number of components of $G-S$ that are adjacent in $G$ to vertices of $X$. If $X_1, X_2, \dots , X_k$ are disjoint nonempty subsets of $V(G)$ then by $G[X_1, X_2, \dots , X_k]$ we mean the $k$-partite subgraph of $G$ with vertex set $X_1 \cup  X_2 \cup \dots \cup X_k$ and edge set $\{ uv \in E(G) | u \in X_i, v \in X_j, 1 \leq  i < j \leq k \}$.

\begin{LEM}\label{tough-set1}
	Let $G$ be a connected $P_4$-free graph and let $S$ be a cutset of $G$ such that each vertex in $S$ is adjacent to at least two distinct components of $G-S$. 
	Then  the following statements are true.
	\begin{enumerate}[(i)]
	    \item For each $u\in S$ and each component $R\subseteq G-S$, if $u$
		is adjacent to one vertex in $R$ then $u$ is adjacent to every vertex in $R$. \label{p4u}
		\item  Let $R$ be a component of $G-S$, and let $G'$
		be obtained from $G$ by contracting $R$ into a single vertex.
		Then $G'$ is $P_4$-free. \label{p4Rcontraction}
		\item  If S is a minimal cutset of $G$, then $G[S, V(G) - S]$ is a complete bipartite subgraph of $G$.\label{min-cutset}
		\item \label{com-bipar} Suppose that $S$ is not a minimal cutset of $G$. There exist a cutset $U\subseteq S$ of $G$, nonempty $X\subseteq S-U$
		and nonempty $Y\subseteq V(G)-S$ such that each of the following holds. 
		\begin{enumerate}[(a)]
		\item  $G[X\cup Y]$ is a component of $G-U$.
		\item $G[U,X,Y]$ is a complete tripartite subgraph of $G$. 
		\end{enumerate}
	\end{enumerate}
\end{LEM}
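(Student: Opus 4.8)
The plan is to prove the four parts in sequence, each feeding the next, with part~(iv) carrying essentially all of the difficulty. For part~(i) I would argue by contradiction through a forbidden induced $P_4$. If $u \in S$ is adjacent to $a \in R$ but not to $b \in R$, then following a shortest $a$--$b$ path inside the connected graph $R$ I can locate consecutive vertices $v, v'$ on it with $uv \in E(G)$ and $uv' \notin E(G)$. Since $u$ has a neighbour $w$ in a component of $G - S$ other than $R$, the only edges among $\{w, u, v, v'\}$ are $wu$, $uv$ and $vv'$, so those four vertices induce a $P_4$, a contradiction. Part~(ii) then follows quickly: by part~(i) each vertex of $S$ is adjacent to all or none of $R$, so in the contraction $G'$ the new vertex $r$ is adjacent exactly to the vertices of $S$ that dominated $R$. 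An induced $P_4$ of $G'$ missing $r$ is already an induced $P_4$ of $G$, while one using $r$ lifts to an induced $P_4$ of $G$ once $r$ is replaced by an arbitrary vertex of $R$, because all adjacencies at $r$ are reproduced faithfully; either way $G$ would fail to be $P_4$-free.

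For part~(iii) I would use minimality directly. By part~(i) it is enough to show that each $u \in S$ is adjacent to every component of $G - S$. Were $u$ to miss a component $R_0$, then $R_0$ would survive intact as a component of $G - (S - u)$ (it has no edge to $u$ or to the remaining components), so $S - u$ would still be a cutset, contradicting minimality of $S$. Hence every $u \in S$ meets every component, and part~(i) upgrades this to $u$ being adjacent to every vertex of $V(G) - S$, giving the complete bipartite subgraph.

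Part~(iv) is the crux, and I would prove it by induction on $|V(G)|$. Choose a minimal cutset $U_0 \subseteq S$; since a minimal cutset automatically has every vertex adjacent to at least two components (otherwise some vertex could be discarded), part~(iii) applies and $U_0$ is completely joined to $V(G) - U_0$, and moreover $S - U_0 \neq \emptyset$ because $S$ is not minimal. The first substantive step is to produce a component $R$ of $G - U_0$ that is genuinely split by $S$, that is, with $R \cap S \neq \emptyset \neq R \setminus S$: if no component were split, a vertex $v \in S - U_0$ lying in a component contained in $S$ would have all of its neighbours inside $U_0 \cup R \subseteq S$ and so be adjacent to no component of $G - S$, contrary to hypothesis. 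Writing $S_R = R \cap S$, the pair $(R, S_R)$ then satisfies the hypotheses of the lemma itself: $R$ is connected and $P_4$-free, and every $v \in S_R$ is adjacent to at least two components of $G - S$, all of which lie inside $R$, so $S_R$ is a cutset of $R$ with each vertex adjacent to at least two components of $R - S_R$.

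Finally I would branch on whether $S_R$ is minimal in $R$. If it is, part~(iii) applied to $(R, S_R)$ makes $R[S_R, R \setminus S]$ complete bipartite, and then $U = U_0$, $X = S_R$, $Y = R \setminus S$ succeed, since $U_0$ is joined to all of $R$. If $S_R$ is not minimal, the induction hypothesis (legitimate because $U_0 \neq \emptyset$ forces $|V(R)| < |V(G)|$) supplies $U_R \subseteq S_R$ together with $X$ and $Y$ for $(R, S_R)$, and I set $U = U_0 \cup U_R$ while keeping the same $X$ and $Y$. The component $X \cup Y$ of $R - U_R$ remains a component of $G - U$, since deleting $U_0$ isolates $R$ from the rest of $G - U_0$ and deleting $U_R$ only refines $R$; and the tripartite condition is inherited, because $U_0$ is joined to all of $X \cup Y$ while the adjacencies between $U_R$ and $X$, between $U_R$ and $Y$, and between $X$ and $Y$ are complete by the inductive conclusion for $R$. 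The induction terminates because each recursive call strictly decreases the vertex count. I expect the most delicate point to be exactly this final bookkeeping: confirming that $U_0 \cup U_R$ is again a cutset of $G$, that $X \cup Y$ survives as a whole component of $G - U$, and that the three joins assemble into a genuine complete tripartite subgraph.
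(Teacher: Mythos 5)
Your proposal is correct and follows essentially the same route as the paper: the same induced-$P_4$ contradiction for (i), the same lifting argument for (ii), the same minimality argument for (iii), and for (iv) the same induction on $|V(G)|$ via a minimal cutset $U_0\subseteq S$, a component of $G-U_0$ meeting both $S$ and $V(G)-S$, and a branch on whether the induced cutset of that component is minimal. The only cosmetic differences are your use of a shortest path rather than an arbitrary edge of $R$ in (i) and your explicit verification that $U_0$ satisfies the lemma's hypothesis, which the paper leaves implicit.
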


\begin{proof}

 For \eqref{p4u}, suppose $u$ is adjacent to some but not all vertices of $R$.  Since $R$ is connected there must be $v_1 v_2 \in E(R)$ where $v_1$ is adjacent to $u$ but $v_2$ is not.  We know $u$ is also adjacent to $w$ in another component of $G-S$.  Then $v_2 v_1 u w$ is an induced $P_4$, a contradiction.

 The statement \eqref{p4Rcontraction} follows easily by noting that  
any induced $P_4$ of $G'$ corresponds to an induced $P_4$ of $G$ (using \eqref{p4u} 
if the contracted vertex is contained in the $P_4$). For \eqref{min-cutset}, if $S$ is a minimal cutset then each $u \in S$ is adjacent to every component of $G - S$, and hence, by \eqref{p4u}, to every vertex of every
component of $G - S$.

We now show \eqref{com-bipar} by induction on $|V (G)|$. Let $U_0$ be a minimal cutset of $G$ that is contained in $S$. Every vertex in $U_0$ is adjacent to every vertex in $V(G)-U_0$ by  \eqref{min-cutset}; call this $(\star)$. 
As  $S-U_0\ne \emptyset$, $G-U_0$ has a nontrivial component $G_1$ such that  $S\cap V(G_1)\ne \emptyset$.  Let $S_1=S\cap V(G_1)$. Then $G_1$ consists of $G[S_1]$, the components of $G - S$ adjacent to $S_1$,
and the edges of $G$ between $S_1$ and these components. Hence, each vertex in $S_1$ is adjacent
to at least two components of $G_1 - S_1$ (thus, $S_1$ is a cutset of $G_1$). If $S_1$ is a minimal cutset of $G_1$, then  let  $U=U_0$, $X=S_1$ and $Y=V(G_1)-S_1$.  Then (a) holds by definition of $G_1$ and (b) holds by $(\star)$ and because $G[X, Y] = G_1[S_1, V(G_1)-S_1]$ is complete bipartite by \eqref{min-cutset}.

Otherwise, by induction, with $G_1$ taking the role of $G$ and $S_1$ taking the role of $S$, we find a cutset $U_1\subseteq S_1$ of $G_1$, $X_1\subseteq S_1-U_1$ and $Y_1\subseteq V(G_1)-S_1$ such that  $G_1[X_1\cup Y_1]$ is a component of $G_1-U_1$ and $G_1[U_1,X_1,Y_1]$ is a complete tripartite subgraph of $G_1$.
Let $U=U_0\cup U_1$, $X=X_1$, and $Y=Y_1$.  Clearly, $U\subseteq S$, $X\subseteq S-U$ and $Y\subseteq V(G_1)-S_1\subseteq V(G)-S$.  We claim that $U, X$ and $Y$ satisfy (a) and (b).
 Since $G_1$ is a component of $G - U_0$, every component of $G_1 - U_1$ is a component of $G - U_0 - U_1 = G - U$, so $U$ is a cutset of $G$ and $G_1[X_1 \cup Y_1] = G[X \cup Y]$ is a component of $G - U$. Because $G_1[U_1,X_1,Y_1] = G[U_1,X,Y]$ is a complete tripartite graph and by $(\star)$, we see that 
 $G[U,X,Y]$ is a complete tripartite subgraph of $G$. 
\end{proof}

\begin{LEM}\label{tough-set}
	Let $G$ be a connected  graph and let $S$ be a tough-set of $G$. Suppose $\tau(G)=t \leq 1$.
	Then  the following statements hold.
\begin{enumerate}[(i)]
\item \label{tough-com} For any nonempty $S'\subseteq  S$ with  $S'\ne S$, $S'$ is adjacent in $G$ to at least $|S'|/t+1$ components of $G-S$.  
\item \label{tough-com-s'=s} For any nonempty $S'\subseteq  S$, $S'$ is adjacent in $G$ to at least $|S'|/t$ components of $G-S$.
\item \label{tough-com2} Every vertex of $S$ is adjacent to at least two components in $G-S$. 
\item  \label{tough-contraction} Let $R$ be a component of $G-S$.  If $S$ is a maximal tough-set of $G$, $k$ is a positive integer, and $t \geq \frac{1}{k}$, then $R$ is $\frac{1}{k}$-tough.
\item \label{tough-contraction-whole} Suppose $G$ is $P_4$-free.  Let $R$ be a component of $G-S$, and let $G'$ be obtained from $G$ by contracting $R$ into a single vertex. Then $G'$ is $t$-tough. 
	\end{enumerate}
\end{LEM}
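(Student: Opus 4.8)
The plan is to verify the toughness condition directly: I will show that every $S'\subseteq V(G')$ with $c(G'-S')\ge 2$ satisfies $|S'|\ge t\,c(G'-S')$. Write $r$ for the vertex to which $R$ is contracted; its neighbourhood in $G'$ is $N_G(R)\subseteq S$. First observe that $G'$ is $P_4$-free: every vertex of $S$ meets at least two components of $G-S$ by Lemma~\ref{tough-set}\eqref{tough-com2}, so the hypothesis of Lemma~\ref{tough-set1} holds and Lemma~\ref{tough-set1}\eqref{p4Rcontraction} applies. If $r\notin S'$, then $S'$ is a cutset of $G$ and, since $R$ is connected, contracting it does not change the number of components, so $c(G-S')=c(G'-S')$ and $\tau(G)=t$ finishes this case. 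So assume $r\in S'$ and set $W=S'\setminus\{r\}$. Let $P$ be the component of $G'-W$ containing $r$, let $P_1,\dots,P_m$ be the components of $P-r$, and let $c_0$ be the number of the remaining components of $G'-W$, so that $c(G'-S')=c_0+m$. If $m\le 1$ then $c(G'-S')\le c(G-W)$ while $W$ is still a cutset of $G$, giving $|S'|=|W|+1>|W|\ge t\,c(G-W)\ge t\,c(G'-S')$.

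The crux is the case $m\ge 2$. Here $r$ is a cutvertex of the connected $P_4$-free graph $P$, and a cutvertex of a connected cograph is universal in it; hence $P^{*}:=P-r=P_1\cup\dots\cup P_m\subseteq N_{G'}(r)=N_G(R)\subseteq S$. This is the key structural point: the bridged pieces consist entirely of tough-set vertices, and $|P^{*}|\ge m$. I would next determine which components of $G-S$ meet $P^{*}$. If such a component $D\ne R$ had a vertex surviving in $G'-S'$, then by Lemma~\ref{tough-set1}\eqref{p4u} that vertex would be adjacent to a vertex of $P^{*}\subseteq S$ and would therefore lie in some $P_i\subseteq S$, which is impossible. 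Thus every component of $G-S$ meeting $P^{*}$ is either $R$ or entirely contained in $W$; let $\mathcal D'$ be the collection of the latter. Applying Lemma~\ref{tough-set}\eqref{tough-com-s'=s} to $P^{*}$ gives $1+|\mathcal D'|\ge |P^{*}|/t$, whence $|W|\ge |\bigcup\mathcal D'|\ge|\mathcal D'|\ge |P^{*}|/t-1\ge m/t-1$.

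It remains to pay for the $c_0$ outer components. When $c_0=0$ the bound above already gives $|S'|=|W|+1\ge m/t\ge tm=t\,c(G'-S')$, using $t\le 1$. When $c_0\ge 1$ I would delete, instead of $W$, the smaller set $W'=W\setminus\bigcup\mathcal D'$. The point, again from $P_4$-freeness, is that no $D\in\mathcal D'$ can be adjacent to an $S$-vertex lying in an outer component: if $D$ were adjacent both to $v\in P^{*}$ and to such a $w$, then for any $a\in R$ and $d\in D$ the vertices $a,v,d,w$ would induce a $P_4$ unless $w\in N_G(R)$, while $w\in N_G(R)$ would place $w$ in $P$ rather than in an outer component. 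Consequently, restoring the vertices of $\bigcup\mathcal D'$ only enlarges the component of $G-W'$ containing $R$ and keeps the $c_0$ outer components separate, so $c(G-W')\ge c_0+1$. Toughness of $G$ then gives $|W'|\ge t(c_0+1)$, hence $|W|\ge t(c_0+1)+|\bigcup\mathcal D'|\ge t(c_0+1)+m/t-1$, and $|S'|=|W|+1\ge t c_0+t+m/t\ge t(c_0+m)$, the last inequality using $m/t\ge tm$ for $t\le 1$.

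The main obstacle is exactly the case $r\in S'$ with $m\ge 2$. Everything rests on the two $P_4$-freeness facts isolated above: that $r$ becomes universal on its block, so the bridged pieces sit inside $S$, and that the components of $G-S$ hidden inside $W$ cannot link the contracted side to the outer components. Once these are in hand, Lemma~\ref{tough-set}\eqref{tough-com-s'=s} supplies the lower bound on $|W|$, and a short toughness computation (crucially using $t\le 1$) completes the estimate; the routine $m\le 1$ and $r\notin S'$ cases are immediate from $\tau(G)=t$.
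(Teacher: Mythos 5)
Your argument addresses only part \eqref{tough-contraction-whole}; parts \eqref{tough-com}--\eqref{tough-contraction} are nowhere proved, even though you invoke \eqref{tough-com-s'=s} and \eqref{tough-com2} as ingredients. That is not circular (those parts are independent, short toughness computations, and the paper proves them first), but a complete proof of the lemma as stated must include them. For part \eqref{tough-contraction-whole} itself your proof is correct, and it takes a genuinely different route in its final accounting. The paper fixes a \emph{tough-set} $Q$ of $G'$ with $\tau(G')=t'$, and after the same structural analysis (your $P^*$ is its $X$, your $\mathcal D'$ is its $B_2,\dots,B_b$, and your ``no $D\in\mathcal D'$ meets an outer component'' is its ``no edge from $Y$ to $Z$'') it closes by applying \eqref{tough-com-s'=s} twice --- once to $(G,S)$ and once to $(G',Q)$ --- to get $|Y|\ge|X|/t$ and $|X|\ge|Y|/t'$, hence $tt'\ge 1$ and $t'=t=1$. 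You instead verify $|S'|\ge t\,c(G'-S')$ for an \emph{arbitrary} cutset $S'$ of $G'$, which costs you the auxiliary set $W'=W\setminus\bigcup\mathcal D'$ and the chain $|S'|\ge tc_0+t+m/t\ge t(c_0+m)$, but spares you any appeal to the tough-set structure of $G'$ (the paper needs $Q$ to be a tough-set to apply \eqref{tough-com-s'=s} in $G'$; you only ever apply toughness of $G$). Two small points to tighten: the assertion that a cutvertex of a connected cograph is universal deserves its one-line justification --- it is immediate from Lemma~\ref{tough-set1}\eqref{min-cutset} applied to the minimal cutset $\{r\}$ of $P$, or from the fact that the complement of a connected cograph on at least two vertices is disconnected; and in the subcase $m\le 1$ you should note explicitly that $c(G'-S')\ge 2$ forces $c_0\ge 1$, so that $W$ really is a cutset of $G$ and toughness applies. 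With those remarks and the routine proofs of \eqref{tough-com}--\eqref{tough-contraction} added, your argument stands.
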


An equivalent way to state the conclusion of \eqref{tough-contraction}
is that $R$  is $(1/\lceil 1/t \rceil)$-tough. We cannot in general
strengthen this to say that $R$ is $t$-tough. For example, suppose that
$p \geq 2$ and $G = ((2p - 2)K_1 \cup  K_{1,2}) + K_p$. It is not
difficult to show that $\tau (G) = \frac{p}{2p-1} $, with maximal
tough-set $S = V (K_p)$, but the component $R = K_{1,2}$ of $G - S$ is
only $\frac{1}{2}$-tough, not $\frac{p}{2p-1} $-tough.

\begin{proof}
	For \eqref{tough-com}, let $S^*=S-S'\neq \emptyset$.  Note that
$|S^*| \geq t \, c(G-S^*)$,  by toughness if $c(G-S^*) \geq 2$, and
because $t \leq 1$ if $c(G-S^*)=1$. Also, $c(G-S^*) \ge c(G-S) - c(G,
S, S')$+1. 
	Then 
	\begin{eqnarray*}
	|S'|&=&|S|-|S^*|\le |S|-t\,c(G - S^*)= t\,c(G-S)-t\,c(G - S^*) \\
	    &\leq& t\,c(G-S)-t(c(G-S)-c(G,S,S')+1)=t\,c(G,S,S')-t.
	\end{eqnarray*}
	implying that $c(G,S,S')\ge |S'|/t+1$. For \eqref{tough-com-s'=s}, use \eqref{tough-com} if $S' \neq S$, and if $S'=S$ we have $c(G-S)=|S|/t$ since $S$ is a tough-set. 

For \eqref{tough-com2}, if $|S|\ge 2$, it follows directly from \eqref{tough-com} by taking $S'$ as singletons. 
If $|S|=1$, then the single vertex of $S$ is adjacent to every component of $G - S$.
		
For \eqref{tough-contraction}, we may assume $R$ is not complete. Let $Q\subseteq V(R)$ be a tough-set of $R$. 
 Since $S$ is a maximal tough-set of $G$, $S\cup Q$ is not a tough-set of $G$, but it is a cutset of $G$. 
Then
$$ |S|+|Q|=|S\cup Q| >  t\,c(G-(S\cup Q)) =t(c(G-S)-1+c(R-Q)).$$
Since $|S| = t\,c(G - S)$, we see that $|Q| > t(c(R - Q) - 1)$, and since
$t \geq \frac{1}{k}$ we have $k|Q| > c(R - Q) - 1$. Because both sides are integers, $k|Q| \geq c(R - Q)$, and
so $R$ is $\frac{1}{k}$-tough.

 Now we prove \eqref{tough-contraction-whole}.
 By \eqref{tough-com2}, Lemma \ref{tough-set1} applies to $G$ and $S$.
 By Lemma
\ref{tough-set1}\eqref{p4Rcontraction}, $G'$ is $P_4$-free.  Let $Q$ be
a tough-set of $G'$ and  $\tau(G') = t'$. We may assume that $t' \leq
1$; otherwise, $t \leq 1 < t'$. 
 Then by \eqref{tough-com2}, Lemma \ref{tough-set1} also applies to $G'$
and $Q$.
Let $v_R$ be the vertex to which $R$ is
contracted.  If $v_R \notin Q$ then $Q$ is also a cutset of $G$ with
$c(G - Q) = c(G' - Q)$. Then 
 $$t' = \frac{|Q|}{c(G'-Q)} = \frac{|Q|}{c(G-Q)} \geq t.$$
 So we may assume $v_R \in Q$. Let $A_1, A_2, \dots, A_a$ be the
components of $G' - Q$ adjacent in $G'$ to $v_R$, where $a \geq 2$ by
\eqref{tough-com2}.  By Lemma \ref{tough-set1}\eqref{p4u} for $G'$ and
$Q$, $v_R$ is adjacent in $G'$ to every vertex of  $A_i$ for all $i$
with $1 \leq
i \leq a$, i.e., $v_R$ is adjacent in $G'$ to every vertex of $X =
\bigcup_{i=1}^{a} V(A_i)$. On the other hand,  all neighbors of $v_R$ in
$G'$ lie in $S$, and hence $X \subseteq S$.

Let $B_1 = v_R, B_2, \dots , B_b$ be the components of $G' - S$ adjacent in $G'$ to vertices of $X$, and
$Y = \bigcup_{i=1}^{b} V(B_i)$. The components of $G - S$ adjacent in $G$ to $X$ are just $R$ and $B_2, \dots , B_b$, i.e., $c(G, S, X) = b$. Now by \eqref{tough-com-s'=s} for $G$ and $S$, we have 
 \begin{equation}\label{eq1}
|Y| \geq b = c(G, S, X) \geq |X|/t.
 \end{equation}
 Suppose $2 \leq i \leq b$. By Lemma \ref{tough-set1}\eqref{p4u} for $G$
and $S$, if $u \in X$ is adjacent in $G$ to some vertex of $B_i$, then
$u$ is adjacent to all vertices of $B_i$. Thus, every vertex of $B_i$ is
adjacent in $G$, and hence in $G'$, to some vertex of $X$. Since $X$ is
the union of components of $G' - Q$, all edges leaving $X$ go to $Q$, so
$V (B_i) \subseteq Q$. Moreover, $V(B_1) = \{v_R\} \subseteq Q$ and
hence $Y \subseteq Q$. 

 Let $Z$ be the set of vertices in all components of $G'-Q$
other than $A_1, A_2, \ldots, A_a$.  Then $Z = V(G') -Q - X$, and there
are no edges of $G'$ from $\{v_R\} \cup X$ to $Z$.
 Thus, there is no edge in $G'$ from $Y$ to $Z$; otherwise, there is an
induced $P_4$ starting at $v_R$ then visiting a vertex of $X$, a vertex
of $Y - \{v_R\}$ (which is nonempty because $|Y| \geq |X|$ by
\eqref{eq1}, and $|X| \geq a \geq 2$) and a vertex of $Z$. Therefore,
$c(G', Q, Y ) = a$, and by \eqref{tough-com-s'=s} for $G'$ and $Q$ we
have  
 \begin{equation}\label{eq2}
|X| \geq a = c(G', Q, Y ) \geq |Y |/t'.
\end{equation}

By \eqref{eq1} and \eqref{eq2}, $tt' \geq 1$, but $t \leq 1$ by
hypothesis and $t' \leq 1$ by assumption, so $t' = t = 1$, and $t' \geq
t$ as required. \end{proof}

\section{Proof of Theorem~\ref{1/2-tough}}

In this section, we prove Theorem~\ref{1/2-tough}. We actually prove a stronger result, of which the following lemma is a
special case.

\begin{LEM}\label{sbep-bipartite}
	If $G = G[X,Y]$ is a complete bipartite graph with $|X| \le |Y| \le
	2|X|$, then $G$ has a spanning SBEP subgraph in which every element of
	$Y$ is a single-block vertex.
\end{LEM}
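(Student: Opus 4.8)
The plan is to construct the desired spanning SBEP subgraph directly, rather than by induction. Write $m = |X|$ and $n = |Y|$, and set $r = n - m$; the hypothesis $|X| \le |Y| \le 2|X|$ says precisely that $0 \le r \le m$. First I would dispose of the degenerate case $m = 1$ separately: if $n = 1$ take the single edge $x_1 y_1$, and if $n = 2$ take the path $y_1 x_1 y_2$, which in both cases is an SBEP subgraph in which every element of $Y$ is a single-block vertex. So I may assume $m \ge 2$.

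For $m \ge 2$, I would single out $m$ of the vertices of $Y$, say $y_1, \ldots, y_m$, and form the even cycle $C = x_1 y_1 x_2 y_2 \cdots x_m y_m x_1$, which spans $X \cup \{y_1, \ldots, y_m\}$ and uses only edges of the complete bipartite graph $G$. This $2m$-cycle is a single $2$-connected block in which every vertex is a single-block vertex. It then remains to absorb the $r$ leftover vertices $y_{m+1}, \ldots, y_{m+r}$ of $Y$. Since $r \le m$, I can attach them by pendant edges to distinct vertices of $X$, adding the edge $x_j y_{m+j}$ for each $1 \le j \le r$, and let $H = C \cup \{x_j y_{m+j} : 1 \le j \le r\}$.

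Next I would verify the SBEP requirements of Definition \ref{def:SBEP} for $H$. Connectivity and the spanning property are immediate, since each pendant edge attaches to the cycle $C$ and together they cover $X \cup Y$. Each pendant edge is a bridge, hence its own block, and $C$ is a single block, so the blocks of $H$ are exactly $C$ together with the pendant edges; thus every block is an even cycle or an edge and (i) holds (bipartiteness would force any cycle to be even in any case). For (ii): each $y_i$ with $i \le m$ lies only in $C$, each $y_{m+j}$ lies only in its pendant edge, and each $x_j$ lies in $C$ and in at most one pendant edge, so every vertex lies in at most two blocks. In particular every element of $Y$ lies in exactly one block, as required.

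The construction is short, so I do not expect a deep obstacle; the points needing care are (a) the case $m = 1$, where no even cycle exists and one must fall back on a single edge or a two-edge path, and (b) confirming that both inequalities are genuinely used --- $|X| \le |Y|$ guarantees $r \ge 0$ so that the spanning cycle through $m$ vertices of $Y$ exists, while $|Y| \le 2|X|$ guarantees $r \le m$ so that the leftover $Y$-vertices can be distributed to distinct $X$-vertices without forcing any $x_j$ into a third block. I would remark that the paper's stronger result presumably tracks additional data about the SBEP subgraph (such as which $X$-vertices remain single-block, or which edges are cycle edges) needed to drive the inductive proof of Theorem \ref{1/2-tough} and to invoke the combination Lemma \ref{comb-sbep-lem}; but for the lemma exactly as stated, the explicit construction above suffices.
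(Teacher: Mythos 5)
Your construction is exactly the paper's proof, just written out in more detail: handle $|X|=1$ separately (where $G$ itself, an edge or a two-edge path, already works), otherwise take an alternating even cycle through $X$ and $|X|$ vertices of $Y$, and attach the at most $|X|$ leftover $Y$-vertices as pendant edges to distinct vertices of $X$. The argument is correct, and your verification of Definition~\ref{def:SBEP}(i)--(ii) and of where each of the two inequalities is used matches what the paper leaves implicit.
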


\begin{proof}
	If $|X|=1$ then $G$ itself is the required subgraph, so suppose that
	$|X| \ge 2$.
	Since $|X| \le |Y|$ there is a cycle $C$ using $X$ and $|X|$
vertices of $Y$.
	Since $|Y| \le 2|X|$, the vertices not in $C$ form a subset of $Y$ of
	size at most $|X|$, so we can add an edge joining each such vertex to a
	distinct vertex of $X$ to obtain the required subgraph.
\end{proof}

The theorem we prove is the following.

\begin{THM}\label{1/2-tough2}
Let $G$ be a connected $P_4$-free graph with at least two vertices. Then $G$ has a spanning SBEP subgraph if and only if $\tau(G)\ge \frac{1}{2}$.
\end{THM}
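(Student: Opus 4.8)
The plan is to prove the two implications separately, with the ``only if'' direction being short and the ``if'' direction carrying essentially all of the work, via induction on $|V(G)|$.

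For necessity, suppose $G$ has a spanning SBEP subgraph $H$. By Theorem \ref{SBEP}, $H$ is prism-hamiltonian, and a prism-hamiltonian graph has a spanning $2$-walk (Kaiser et al.~\cite{Kaiser}). This is a spanning $2$-walk of $G$ as well, since $V(H) = V(G)$ and $E(H) \subseteq E(G)$, and a graph with a spanning $2$-walk is $\frac{1}{2}$-tough (Jackson and Wormald~\cite{JW-k-walks}, as used in the proof of Corollary \ref{kwalk}). Hence $\tau(G) \ge \frac{1}{2}$.

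For sufficiency I would induct on $|V(G)|$, the base case $|V(G)| = 2$ being a single edge. So assume $\tau(G) \ge \frac{1}{2}$ and $|V(G)| \ge 3$. If $\tau(G) \ge 1$ then $G$ is hamiltonian by Theorem \ref{ham-tough}, hence traceable, and any hamiltonian path is an SBEP graph (all blocks are edges, all vertices lie in at most two blocks); this settles the case $\tau(G) \ge 1$, in particular that of complete $G$. So assume $\frac{1}{2} \le t := \tau(G) < 1$, and let $S$ be a maximal tough-set, with $G - S$ having components $R_1, \dots, R_m$; since $S$ is a tough-set, $|S| = t m$, so $|S| < m \le 2|S|$. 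Taking $k = 2$ in Lemma \ref{tough-set}\eqref{tough-contraction} (legitimate as $t \ge \frac{1}{2}$), each $R_i$ is $\frac{1}{2}$-tough, so by induction each $R_i$ with at least two vertices has a spanning SBEP subgraph $H_i$; choosing a single-block vertex $r_i$ of $H_i$ (every SBEP graph has at least two), or letting $r_i$ be the unique vertex when $|R_i| = 1$, I obtain for each $i$ a spanning SBEP subgraph of $R_i$ in which $r_i$ is single-block. By Lemma \ref{tough-set}\eqref{tough-com2} the hypotheses of Lemma \ref{tough-set1} hold for $G$ and $S$, so by Lemma \ref{tough-set1}\eqref{p4u} every vertex of a fixed $R_i$ has the same (nonempty) set of neighbours $S_i \subseteq S$; thus the choice of $r_i$ within $R_i$ is irrelevant for what follows.

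It then remains to assemble the pieces. Form the bipartite graph $B^\ast$ on $S \cup \{r_1, \dots, r_m\}$ in which $r_i$ is joined to each vertex of $S_i$; these are edges of $G$. I seek a spanning SBEP subgraph $H_0$ of $B^\ast$ in which every $r_i$ is single-block: given such an $H_0$, gluing it to each $H_i$ at the cutvertex $r_i$ (single-block in both) keeps every block an even cycle or edge, leaves each $r_i$ in exactly two blocks, each vertex of $S$ in at most two blocks, and each interior vertex of $R_i$ unchanged, so the union is a spanning SBEP subgraph of $G$ and the induction closes. When the adjacency is complete, i.e.\ $S_i = S$ for every $i$ (for instance when $S$ is a minimal cutset, by Lemma \ref{tough-set1}\eqref{min-cutset}), $B^\ast$ is the complete bipartite graph with parts of sizes $|S|$ and $m$ satisfying $|S| \le m \le 2|S|$, and Lemma \ref{sbep-bipartite} produces exactly such an $H_0$.

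The hard part, which I expect to be the main obstacle, is building $H_0$ when the adjacency is not complete. The input is the toughness of $G$ turned into a Hall-type expansion condition on $B^\ast$: by Lemma \ref{tough-set}\eqref{tough-com}, every proper nonempty $S' \subsetneq S$ satisfies $|N_{B^\ast}(S')| = c(G,S,S') \ge |S'|/t + 1 > |S'|$, while $|N_{B^\ast}(S)| = m$ by Lemma \ref{tough-set}\eqref{tough-com-s'=s}. Since $B^\ast$ need not be $P_4$-free, this is genuinely a statement about general bipartite graphs with strong expansion, generalizing Lemma \ref{sbep-bipartite}. My intended route is to construct $H_0$ as a union of even cycles alternating between $S$ and the $r_i$, together with pendant edges attaching the remaining $r_i$ to vertices of $S$, glued only at vertices of $S$ so that each $r_i$ stays single-block and each vertex of $S$ lies in at most two blocks; the expansion inequality should provide enough edges to route the cycles and to distribute the leftover $r_i$ as pendants. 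A plausible alternative is to peel off one complete tripartite piece at a time using Lemma \ref{tough-set1}\eqref{com-bipar}, or to contract a component and invoke Lemma \ref{tough-set}\eqref{tough-contraction-whole} to preserve the toughness bound under induction. In every version the delicate point is to respect the two defining constraints of an SBEP graph, namely that all blocks are even cycles or edges and at most two blocks pass through each vertex, while combining the pieces using only the edges that toughness guarantees, for which Lemma \ref{comb-sbep-lem} is the basic splicing tool.
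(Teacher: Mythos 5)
Your necessity argument and the easy reductions (the case $\tau(G)\ge 1$ via Theorem \ref{ham-tough}, the $\frac12$-toughness of each component $R_i$ via Lemma \ref{tough-set}\eqref{tough-contraction}, and the gluing of the $H_i$ to a connecting structure at single-block vertices) are all sound. But the proof has a genuine gap exactly where you say the hard part is: you never construct $H_0$. You reduce the theorem to the claim that a bipartite graph $B^\ast$ with parts $S$ and $\{r_1,\dots,r_m\}$ satisfying the Hall-type expansion $|N(S')|\ge |S'|/t+1$ for proper nonempty $S'\subsetneq S$ (and $|N(S)|=m$) has a spanning connected subgraph whose blocks are even cycles and edges, with every $r_i$ single-block and every $S$-vertex in at most two blocks. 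That claim is not proved, it is not obviously true (as you note, $B^\ast$ need not be $P_4$-free, so none of the structural lemmas apply to it, and for $t$ close to $1$ the expansion is only $|N(S')|\ge |S'|+1$, which is weak), and it carries essentially the entire content of the theorem. A proof that ends with ``the expansion inequality should provide enough edges to route the cycles'' is a proof of the easy complete-bipartite subcase (Lemma \ref{sbep-bipartite}) plus an unproven general-position claim.

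The paper avoids ever facing a general bipartite expander. It splits into two cases on the maximal tough-set $S$. If $G-S$ has a nontrivial component $R$, it contracts $R$ to a vertex $v_R$, uses Lemma \ref{tough-set}\eqref{tough-contraction-whole} to show the contraction $G'$ is still $\frac12$-tough (this is the reason that lemma exists), applies induction to both $G'$ and $R$, and splices the two SBEP subgraphs by replacing $v_R$ with one or two single-block vertices of the subgraph of $R$. If every component of $G-S$ is a single vertex, it uses the complete tripartite structure of Lemma \ref{tough-set1}\eqref{com-bipar} to peel off one complete bipartite piece $G[X,Y_1]$ with $|X|<|Y_1|\le 2|X|$ (handled by Lemma \ref{sbep-bipartite}), proves in three subcases that the remainder $G'=G-V(R)$ is still $\frac12$-tough, applies induction to $G'$, and joins the two pieces with Lemma \ref{comb-sbep-lem}. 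You mention both of these moves as ``plausible alternatives,'' but executing either of them is where the real work lies --- in particular the toughness-preservation arguments for $G'$ in both cases --- and your proposal does not carry out any of it. To repair your proof you would either have to prove your bipartite expansion lemma (a nontrivial new result) or switch to the paper's peeling strategy, which processes one component or one complete bipartite block at a time precisely so that Lemma \ref{sbep-bipartite} and Lemma \ref{comb-sbep-lem} are the only construction tools ever needed.
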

	
\begin{proof}
	The necessity  is clear, as  any SBEP subgraph contains a
spanning 2-walk and the subgraph corresponding to a 2-walk is
$\frac{1}{2}$-tough. We show sufficiency.
	 We may assume that $t = \tau(G) < 1$, otherwise Theorem~\ref{ham-tough} implies  that $G$ has a hamiltonian cycle, which is a spanning SBEP subgraph.  We prove Theorem~\ref{1/2-tough2}
	 by induction on $|V(G)|$. The result holds if $|V(G)|\le 3$. 
	 So we assume that $|V(G)|\ge 4$. 
	 Let $S\subseteq V(G)$ be a maximal tough-set of $G$.
 By Lemma \ref{tough-set}\eqref{tough-com2}, Lemma \ref{tough-set1}
applies to $G$ and $S$.
 We consider two cases.

	\begin{case} Suppose $G-S$ has a nontrivial component.
	 Let $R$ be a nontrivial component of $G-S$, and let
	 $G'$ be the graph obtained from $G$ by contracting $R$
	 into a single vertex, which has at least two vertices. By Lemma~\ref{tough-set}\eqref{tough-contraction-whole}, the graph $G'$ is $\frac{1}{2}$-tough, and by   Lemma~\ref{tough-set}\eqref{tough-contraction}, the component $R$ is  $\frac{1}{2}$-tough.
	
	 By induction, $G'$ has a spanning SBEP subgraph $T'$
	 and $R$  has a spanning SBEP subgraph $T_R$. Let  $v_R$
	 be the corresponding contracted vertex in $G'$, and let $x, y$ be two single-block vertices  in $T_R$ (any SBEP graph has at least two single-block vertices). 
	 By Lemma~\ref{tough-set1}\eqref{p4u}, the neighbors of $v_R$ in $T'$ are all adjacent in $G$ to
	 the vertices $x, y$.  Therefore, any subgraph of $G'$, or $T'$, can be embedded in $G$ by
	 replacing $v_R$ by either $x$ or $y$.

If $v_R$ is a single-block vertex in $T'$, we embed $T'$ in $G$ with
$x$ replacing $v_R$.  Then $T' \cup T_R$ is a spanning SBEP subgraph of
$G$.
Now suppose $v_R$ is a cutvertex.
Then $v_R$ is contained in exactly two blocks $B_1$, $B_2$ of $T'$.
Hence, $T'$ is the union of two connected subgraphs $T'_1$ (containing
$B_1$) and $T'_2$ (containing $B_2$) that have only $v_R$ in common.
Each of $T'_1$ and $T'_2$ is an SBEP graph in which $v_R$ is a
single-block vertex.  Embed $T'_1$ in $G$ with $x$ replacing $v_R$, and
embed $T'_2$ in $G$ with $y$ replacing $v_R$.
Then $T'_1 \cup T'_2 \cup T_R$ is a spanning SBEP subgraph of $G$.

\end{case}


	  \begin{case} Suppose each component of $G-S$ is a single vertex.
We may assume that $S$ is not a minimal cutset of $G$. For otherwise, $G[S, V(G)-S]$ is complete bipartite by Lemma~\ref{tough-set1}\eqref{p4u}.  Since $G$ is $\frac{1}{2}$-tough and less than $1$-tough, $|S|<|V(G)-S|\le 2|S|$ and so $G[S, V(G)-S]$, and hence $G$, has a spanning SBEP subgraph by Lemma \ref{sbep-bipartite}.
	
 Applying
Lemma~\ref{tough-set1}\eqref{com-bipar}, we find a cutset $U\subseteq S$
of $G$, $X\subseteq S-U$ and $Y\subseteq V(G)-S$ such that $G[X\cup Y]$ is a
component of $G-U$,  and $G[U, X, Y]$ is a complete tripartite subgraph
of $G$.  Consequently, $G[X,Y]$  is a spanning complete bipartite
subgraph of the component $G[X\cup Y]$ of $G-U$.	
	 
By Lemma \ref{tough-set}\eqref{tough-com}, $|Y| = c(G, S, X) \ge
\lceil |X|/t \rceil + 1$.  Let $Y_1$ be a subset of $Y$ of size $\lceil
|X|/t \rceil$, and $Y_2 = Y - Y_1 \ne \emptyset$.  Let $R$ be the
complete bipartite subgraph $G[X, Y_1]$ of $G$, and let $G'=G-V(R)$.

We now show that $G'$ is $\frac{1}{2}$-tough. Assume to the contrary that $t'=\tau(G')<\frac{1}{2}$, so that $t'<t$. 
Let $Q\subseteq V(G')$ be a tough-set of $G'$, and let 
$$Q_1=Q\cap S  \quad \text{and}\quad Q_2=Q\cap (V(G)-S).$$
 By Lemma \ref{tough-set}\eqref{tough-com2}, Lemma \ref{tough-set1}
applies to $G'$ and $Q$.
We consider three cases below. 

\begin{ccase}\label{bothnonempty}
 Suppose that $U-Q\ne \emptyset$ and $Y_2-Q\ne \emptyset$.
Then there is one
component of $G'-Q$ containing all of $U-Q$ and all of $Y_2-Q$, since $G'[U-Q, Y_2-Q]$ is a complete bipartite subgraph of $G[U, X, Y]$. 
Adding back $X$
and $Y_1$ to $G'$ just adds $X$ and $Y_1$ to this component without changing any of
the other components of $G'-Q$,  so 
$$2 \le c(G'-Q) = c(G-Q) \le  |Q|/t$$
 by toughness of
$G$, contradicting $c(G'-Q) = |Q|/t' > |Q|/t$.
\end{ccase}

\begin{ccase}\label{U-Qempty}
 Suppose that $U-Q=\emptyset$.
 Since $G[X\cup Y]$  is a component of $G-U$ and $U\subseteq Q$, there
are no edges of $G$ from $X \cup Y$, or in particular from $Y_1$, to
$V(G')-Q$. 
 Thus, if $Q^*=Q\cup X$, then $G-Q^* = G[(V(G')-Q) \cup Y_1]$ is
$G'-Q$ together with isolated vertices from $Y_1$.
 Hence, $c(G-Q^*) = c(G'-Q)+|Y_1|$.
 Then because $|Q| = t'\,c(G'-Q)$,
$|Y_1| \ge |X|/t$ and $t' < t$  we have that
\begin{eqnarray*}
	|Q^*|&=&|Q|+|X| \;\;\leq\;\; t'\,c(G'-Q)+t|Y_1|\\
	&< & t(c(G'-Q)+|Y_1|) \;\;=\;\; t\,c(G-Q^*),\quad
\end{eqnarray*}
contradicting $G$ being $t$-tough.
\end{ccase}

\begin{ccase}\label{Y2-Qempty}
 Suppose that $Y_2-Q=\emptyset$.
 Then $Y_2 \subseteq Q_2$, so $Q_2 \ne \emptyset$.
 Let $A_1, A_2, \ldots, A_a$ be the components of $G'-Q$ adjacent in
$G'$ to vertices of $Q_2$.
 Given $A_i$, $1 \le i \le a$, there is $w \in V(Q_2)$ adjacent to some
vertex of $A_i$.
 By Lemma \ref{tough-set1}\eqref{p4u} for $G'$ and $Q$, $w$ is adacent to
every vertex of $A_i$, and hence $V(A_i) \subseteq S$.  Let $S_1 =
\bigcup_{i=1}^a V(A_i) \subseteq S$.
 Vertices of $S_1$ can only be adjacent in $G'$ to vertices of $S \cup Q =
S \cup Q_2$.
 Now by Lemma \ref{tough-set}\eqref{tough-com-s'=s} for $G'$ and $Q$, and because $t' <
\frac{1}{2}$, $|S_1| \ge a = c(G',Q, Q_2) \ge |Q_2|/t' > 2|Q_2|$.

Since $X\subseteq S$,  and $G[X\cup Y]$
is a component of $G-U$, we see that all vertices in $S_1\cup X$ together  are adjacent in $G$ to at most 
$|Q_2 \cup Y|=|Q_2|+|Y_1|$ components of $G-S$. 
Therefore, by Lemma \ref{tough-set}\eqref{tough-com-s'=s}, we have
 $|Q_2| + |Y_1| \ge c(G, S, S_1 \cup X) \ge (|S_1| + |X|)/t$.
 But $|X|/t+1 > \lceil |X|/t \rceil = |Y_1|$
 and $|S_1| > 2|Q_2|$, so we get $|Q_2| + |X|/t + 1 > 2|Q_2|/t + |X|/t$,
giving $|Q_2| + 1 > 2|Q_2|/t \ge 2|Q_2|$, from which $|Q_2| < 1$, which
is a contradiction.

 This concludes the proof that $G'$ is $\frac{1}{2}$-tough.

\end{ccase}
\end{case}

Since $\frac{1}{2} \le t < 1$, we have $|X| < |Y_1| = \lceil |X|/t
\rceil \le 2|X|$.  Thus, by Lemma \ref{sbep-bipartite}, the complete
bipartite subgraph $R$ has a spanning SBEP subgraph $T_R$. By induction,
$G'$ has a spanning SBEP subgraph $T'$.
 Let $xy_1 \in E(T_R)$ with $x\in X$ and $y_1\in Y_1$.
 Let $zy_2 \in E(T')$ with $y_2 \in Y_2$; then $z \in U$.
 Then $T_R$ and $T'$ are two disjoint SBEP subgraphs, and
 $zy_1, xy_2 \in E(G)$ because $G[U,X,Y]$ is complete
tripartite. Hence, by  Lemma \ref{comb-sbep-lem} we obtain a spanning
SBEP subgraph of $G$. 
 \end{proof}

Now combining Theorems~\ref{SBEP} and \ref{1/2-tough2}
gives Theorem~\ref{1/2-tough}.

\bibliographystyle{plain}

\end{document}